
\documentclass[11pt]{amsart}

\usepackage[utf8]{inputenc}
\usepackage[english]{babel}
\usepackage[foot]{amsaddr}
\usepackage{hyperref}
\usepackage{longtable}
\usepackage{multirow}
\input xy
\xyoption{all}
\usepackage{mathrsfs}
\usepackage[dvipsnames]{xcolor}


\usepackage[a4paper]{geometry}
\geometry{hmargin={2.5cm,2.5cm},height=23cm}

\setlength{\partopsep}{0pt}
\setlength{\itemsep}{0pt}
\setlength{\topsep}{0pt}
\setlength{\parsep}{0pt}
\setlength{\parskip}{0.30\baselineskip}   


\setcounter{totalnumber}{5}
\setcounter{topnumber}{3}
\setcounter{bottomnumber}{2}


\usepackage{indentfirst}

\usepackage[pdftex]{graphicx}

\usepackage{amsmath}
\allowdisplaybreaks  
\usepackage{amssymb} 
\usepackage{amsfonts} 
\usepackage{enumerate} 

\numberwithin{equation}{section}

\newtheorem{theorem}{Theorem}[section]
\newtheorem{proposition}[theorem]{Proposition}
\newtheorem{definition}[theorem]{Definition}
\newtheorem{lemma}[theorem]{Lemma}
\newtheorem{corollary}[theorem]{Corollary}

\newcommand{\R}{\mathbb{R}}
\newcommand{\Q}{\mathbb{Q}}

\newcommand{\N}{\mathbb{N}}

\newcommand{\Lip}{\operatorname{Lip}}

\newcommand{\LIP}{\operatorname{LIP}}
\newcommand{\md}{\operatorname{md}}
\newcommand{\ud}{\operatorname{d}}
 \newcommand{\Leb}[1]{{\mathscr L}^{#1}}




\makeatletter
\@namedef{subjclassname@2020}{%
  \textup{2020} Mathematics Subject Classification}
\makeatother

\title{Connections between metric differentiability and rectifiability}

\author[I. Caama\~{n}o]{Iv\'an Caama\~{n}o$^{1}$}
\address{$^{1}$ Depto. de An\'alisis Ma\-te\-m\'a\-ti\-co y Matem\'atica Aplicada, UCM\\
28040 Madrid, Spain.}
\email{ivancaam@ucm.es}

\author[E. Durand-Cartagena]{Estibalitz Durand-Cartagena$^{2}$}
\address{$^{2}$ Depto. de Matem\'atica Aplicada, ETSI Industriales, UNED\\
28040 Madrid, Spain.} 
\email{edurand@ind.uned.es}

\author[J. {\'A}. Jaramillo]{Jes{\'u}s {\'A}. Jaramillo$^{1}$}
\email{jaramil@mat.ucm.es}

\author[{\'A}. Prieto]{{\'A}ngeles Prieto$^{1}$}
\email{angelin@mat.ucm.es}

\author[E. Soultanis]{Elefterios Soultanis$^{3}$}
\address{$^{3}$ Department of Mathematics and Statistics, University of Jyv\"askyl\"a\\
Seminaarinkatu 15, PO Box 35, FI-40014 University of Jyv\"askyl\"a, Finland.} 
\email{elefterios.e.soultanis@jyu.fi}

\thanks{The research of I. C., J. {\'A}. and E.D-C. is partially supported by grant PID2022-138758NB-I00 (Spain). E.S.'s research is supported by  the Finnish Academy grant no. 355122.}

\keywords{Metric differentiability; Rectifiability; Lipschitz differentiability spaces}
\subjclass[2020]{30L05, 30L99, 51F30}


\begin{document}

\maketitle
\begin{abstract}
We combine Kirchheim's metric differentials with Cheeger charts in order to establish a non-embeddability principle for any collection $\mathcal C$ of Banach (or metric) spaces: if a metric measure space $X$ bi-Lipschitz embeds in some element in $\mathcal C$, and if every Lipschitz map $X\to Y\in \mathcal C$ is differentiable, then $X$ is rectifiable. This gives a simple proof of the rectifiability of Lipschitz differentiability spaces that are bi-Lipschitz embeddable in Euclidean space, due to Kell--Mondino. Our principle also implies a converse to Kirchheim's theorem: if all Lipschitz maps from a domain space to arbitrary targets are metrically differentiable, the domain is rectifiable. We moreover establish the compatibility of metric and w$^*$-differentials of maps from metric spaces in the spirit of Ambrosio--Kirchheim.

\end{abstract}

\section{Introduction}

Going beyond geometric measure theory in Euclidean space, \emph{metric differentiability}, introduced by Kirchheim \cite{Kirchheim} has become an indispensable tool in studying rectifiability of metric spaces. A metric measure space $X=(X,d,\mu)$ is called $n$-rectifiable if $\mu\ll\mathcal H^n$ and $\mu\left( X\backslash\bigcup_{i=1}^\infty \psi_i(E_i) \right) =0$
for some countable family of Lipschitz maps $\psi_i\in \LIP(E_i,X)$ defined on $\Leb{n}$-measurable sets $E_i\subset \R^n$. 
Kirchheim \cite{Kirchheim} showed that every Lipschitz map $f:E\to X$ from a measurable set $E\subset \R^n$ into a metric space is \emph{metrically differentiable}: for  $\Leb{n}$-a.e. $x\in E$ there exists a seminorm $\md_xf$ on $\R^n$ so that
\begin{align}\label{eq:kirch-chart}
	d(f(y),f(z))=\md_xf(y-z)+o(d(y,x)+d(x,z)).
\end{align}
As an illustration of their use, metric differentials give rise to area and co-area formulae, and can be used to show that the maps $\psi_i$ in the definition of rectifiability can be taken to be bi-Lipschitz, see \cite{Kirchheim,amb-kir00}. Recently, Bate \cite{ba22} has obtained a characterization of rectifiability in terms of Gromov-Hausdorff approximations, a much weaker condition than (bi-)Lipschitz maps.

Parallel developments in analysis on metric spaces lead to the seminal work of Cheeger \cite{che99} introducing what have come to be known as Lipschitz differentiability spaces (LDS for short). These are spaces covered by countably many \emph{Cheeger charts}. A Cheeger chart is a pair $(U,\varphi)$ consisting of a Borel set $U\subset X$ with $\mu(U)>0$ and $\varphi\in \LIP(X,\R^n)$ such that every $f\in \LIP(X)$ is differentiable $\mu$-a.e. on $U$ with respect to $(U,\varphi)$: for $\mu$-a.e. $x\in U$ there exists a unique linear map $\ud_xf\in (\R^n)^*$ so that
\begin{align}\label{eq:cheeger-chart}
	f(y)-f(x)=\ud_xf(\varphi(y)-\varphi(x))+o(d(x,y)).
\end{align}
Cheeger showed that spaces endowed with a doubling measure supporting some Poincar\'{e} inequality (in short, PI-spaces) admit a countable covering by such charts, thus establishing a Rademacher-type almost everywhere differentiability result for {Lipschitz} functions from a metric space. This has lead to a rich theory of Sobolev spaces and first order calculus on PI-spaces \cite{hei01,HKST07,bjo11} independently of any rectifiability assumptions. Indeed, one of the motivations in \cite{che99} is to obtain non-embedding results for purely unrectifiable PI-spaces such as Carnot groups and Laakso spaces, see e.g \cite[Theorem 14.2]{che99}, which have since received much attention \cite{CheKlei1, CheKlei2, CheKlei3, Da}. 
The general principle is that differentiability and embeddability imply rectifiability, providing an obstruction for unrectifiable PI-spaces to admit bi-Lipschitz embeddings. The connection between PI-spaces, Cheeger differentiability and rectifiability has also been thoroughly explored in the works of \cite{sch16b,bate17, DaKlei}. For further connections to PDE's and uniform rectifiability, we refer the reader to \cite{az21,ADeRi19}.

In this note we combine Cheeger's idea of differentiability charts with Kirchheim's notion of metric differential. Throughout the paper, a \emph{chart} refers to a pair $(U,\varphi)$ consisting of a Borel set $U\subset X$ with $\mu(U)>0$ and a Lipschitz map $\varphi:X\to \R^n$. The number $n$ is called the dimension of the chart. Below, $\mathcal S(\R^n)$ denotes the set of all seminorms on $\R^n$, equipped with the metric $\delta(s,s')=\sup_{|v|\le 1}|s(v)-s'(v)|$.

\begin{definition}\label{def:metricdifferentiability}
	Given a map $f:A\subset X\to Y$ into a metric space $Y$, we say that $f$ admits a metric differential with respect to the chart $(U,\varphi)$ if there exists a Borel map $\md f:A\cap U\to \mathcal S(\R^n)$ satisfying, for $\mu$-a.e. $x\in A\cap U$,
	\begin{equation}\label{eq:kir-chart-diff}
		\limsup_{A\ni y\to x}\frac{|d_Y(f(y),f(x))-\md_xf(\varphi(y)-\varphi(x))|}{d(x,y)}=0. 
	\end{equation}
\end{definition}

See also \cite{CheKleiSc} for an alternative approach to metric differentiation of mappings between metric spaces which relies on metric differentiation along curves, and \cite{GT} for an extension of Kirchheim's result to maps defined on strongly rectifiable metric spaces. The definition above covers maps from a subset $A\subset X$. The case $A=U$ is the metric analogue of \emph{weak} Cheeger charts while, if $\mu(A\cap U)=0$, the condition is vacuous. We also remark that our definition (see also \cite{amb-kir00}) is slightly weaker than Kirchheim's original definition, where $d(f(y),f(z)))=\md_xf(z-y)+o(d(x,y)+d(x,z))$.
 
Charts with respect to which every $f\in \LIP(U)$ admits a metric differential are weak Cheeger charts, see Proposition \ref{prop:kir-implies-che}. Moreover, metric differentials are compatible with $w^*$-differentials, whenever both exist, see Section \ref{sec:weak-star}. Notice that we do not impose uniqueness of the metric differential in Definition \ref{def:metricdifferentiability}. Indeed, uniqueness among seminorms is a much stronger requirement than uniqueness among linear maps, implying in particular the density of directions realised by $\varphi$. In Theorem \ref{thm:main-thm} we will instead impose the a priori weaker condition
\begin{align}\label{eq:lin-uniqueness}
	\Lip(v\cdot\varphi|_U)>0\ \mu\textrm{-a.e. on $U$ for any }v\in S^{n-1},
\end{align}
which is equivalent to uniqueness of linear differentials, see \cite[Lemma 2.1]{BaSp}. We remark that if $(U,\varphi)$ is a weak Cheeger chart, then the density of directions holds (see \cite[Lemma 9.1]{Ba}) and thus, a posteriori, metric differentials are unique, see Section \ref{sec:KDS-implies-LDS}.


Our main result is a rectifiability criterion which relates metric differentiability of maps into a given target class to bi-Lipschitz embeddability in the same class. Our proof gives a conceptually simple argument covering several non-embeddability results known in the literature, see Corollary \ref{cor:non-embed}. 

\begin{theorem}\label{thm:main-thm}
	
Let $(U,\varphi)$ be an $n$-dimensional chart in $X$ satisfying \eqref{eq:lin-uniqueness}, and $\mathcal C$ a collection of metric spaces so that some $Y\in \mathcal C$ contains a non-trivial geodesic. If every Lipschitz map $U\to Y\in\mathcal C$ admits a metric differential with respect to $(U,\varphi)$, and $U$ bi-Lipschitz embeds into some $Y\in \mathcal C$, then $(U,d,\mu|_U)$ is $n$-rectifiable.

More precisely, $\mu|_U\simeq \mathcal H^n|_U$ and there are disjoint Borel sets $U_i$ with $\mu\big(U\setminus \bigcup_i U_i)=0$ so that $\varphi|_{U_i}$ is bi-Lipschitz for each $i$.

\end{theorem}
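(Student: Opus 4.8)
The plan is to run two essentially independent arguments and then reconcile them through the pushforward of $\mu$ under $\varphi$. First I would extract the bi-Lipschitz decomposition of $\varphi$ directly from the embedding. Let $\iota\colon U\to Y_0$ be the given bi-Lipschitz embedding into some $Y_0\in\mathcal C$, with $L^{-1}d(x,y)\le d_{Y_0}(\iota(y),\iota(x))\le L\,d(x,y)$. Since $\iota$ is a Lipschitz map into $\mathcal C$, the hypothesis provides a metric differential $\md\iota$ with respect to $(U,\varphi)$. At a $\mu$-a.e. point $x$ where \eqref{eq:kir-chart-diff} holds for $\iota$, the lower bi-Lipschitz bound gives, as $y\to x$,
\[
\md_x\iota(\varphi(y)-\varphi(x))\ge d_{Y_0}(\iota(y),\iota(x))-o(d(x,y))\ge L^{-1}d(x,y)-o(d(x,y)).
\]
As $\md_x\iota$ is a seminorm, $\md_x\iota(w)\le\|\md_x\iota\|\,|w|$, so for $y$ close enough to $x$ one obtains $|\varphi(y)-\varphi(x)|\ge (2L\|\md_x\iota\|)^{-1}d(x,y)$. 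Combined with the Lipschitz upper bound $|\varphi(y)-\varphi(x)|\le\Lip(\varphi)\,d(x,y)$, this is a local two-sided bound for $\varphi$.

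Next I would turn this local statement into a genuine decomposition by the standard stratification: partition the full-measure set of good points according to rational upper bounds for $\|\md_x\iota\|$ and for the radius within which the estimate holds, and then refine into Borel pieces of sufficiently small diameter. On each such piece $U_i$ the bound holds for every pair of points, so $\varphi|_{U_i}$ is bi-Lipschitz; the inverses $\psi_i=(\varphi|_{U_i})^{-1}$ are the bi-Lipschitz parametrizations exhibiting $(U,d)$ as metrically $n$-rectifiable, with $\mu(U\setminus\bigcup_iU_i)=0$.

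The second argument supplies the comparison of measures and is where the geodesic enters. Let $\gamma\colon[0,\ell]\to Y_1\in\mathcal C$ be an arclength parametrization of the non-trivial geodesic. For $f\in\LIP(U)$ I would, after localizing and rescaling so that $f$ takes values in $[0,\ell]$, note that $\gamma\circ f\colon U\to Y_1$ is a Lipschitz map into $\mathcal C$ and hence admits a metric differential; since $\gamma$ is isometric, \eqref{eq:kir-chart-diff} for $\gamma\circ f$ reduces to the scalar relation $|f(y)-f(x)|=\md_x(\gamma\circ f)(\varphi(y)-\varphi(x))+o(d(x,y))$. Thus every real-valued Lipschitz function admits a metric differential with respect to $(U,\varphi)$, so by Proposition \ref{prop:kir-implies-che} the chart is a weak Cheeger chart; together with \eqref{eq:lin-uniqueness} the differentials are unique and $(U,\varphi)$ is a genuine $n$-dimensional Cheeger chart in the sense of \eqref{eq:cheeger-chart}.

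Finally I would compare $\mu$ and $\mathcal H^n$ through $\nu_i:=\varphi_*(\mu|_{U_i})$ on $V_i:=\varphi(U_i)\subset\R^n$. For $h\in\LIP(\R^n)$ the composition $h\circ\varphi$ is differentiable $\mu$-a.e. in the chart, and because $\varphi|_{U_i}$ is bi-Lipschitz this transfers to ordinary differentiability of $h$ at $\nu_i$-a.e. point of $V_i$. Since this holds for every $h$, the Alberti--Cs\"ornyei--Preiss characterization of measures obeying Rademacher's theorem forces $\nu_i\ll\Leb{n}$, whence $\mu|_U\ll\mathcal H^n|_U$; with the covering from the first step this already yields $n$-rectifiability. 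For the reverse inclusion I would write $\nu_i=\rho_i\,\Leb{n}$ and use Lebesgue differentiation, together with the comparability of $\mathcal H^n|_{U_i}$ and the pullback of $\Leb{n}$ (valid since $\varphi|_{U_i}$ is bi-Lipschitz), to see that $\rho_i>0$ on the carrier, giving $\mathcal H^n|_U\simeq\mu|_U$. The main obstacle is exactly this last, measure-theoretic step: the decomposition and the geodesic reduction are soft, but upgrading metric rectifiability to the absolute-continuity relations $\mu\simeq\mathcal H^n$ requires the Rademacher-measure input to rule out a singular part of $\nu_i$, and a careful density bookkeeping to control the set where $\rho_i$ vanishes.
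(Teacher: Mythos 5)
Your first two steps are sound and essentially coincide with the paper's own argument. The paper also extracts the decomposition from the metric differential of the embedding, via the same inequality $d(x,y)\le 2L\,\md_x f(\varphi(y)-\varphi(x))\le 2LC\,|\varphi(y)-\varphi(x)|$; it organizes the decomposition through Egorov's theorem plus the exhaustion result \cite[Proposition 3.1.1]{Kei}, whereas you stratify pointwise by $\|\md_x\iota\|$ and by the radius where the error is small --- both are fine. Likewise your geodesic reduction is the paper's Lemma \ref{lem:geod-implies}, which uses a Lipschitz diffeomorphism $\R\to(a,b)$ instead of your truncation; your ``localizing and rescaling'' needs the small remark that metric differentials are local (so the differentials of the truncations $\min(\max(f,q),q+\ell)$ patch on the open sets $\{q<f<q+\ell\}$), but this is routine, and Proposition \ref{prop:kir-implies-che} then gives the weak Cheeger chart exactly as you say.

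The genuine gap is in your third, measure-theoretic step. Bi-Lipschitzness of $\varphi|_{U_i}$ does \emph{not} transfer chart differentiability of $h\circ\varphi$ into ordinary differentiability of $h$ at $\nu_i$-a.e.\ point of $V_i=\varphi(U_i)$: what it yields is only differentiability of $h$ \emph{relative to the set} $V_i$, i.e.\ the first-order expansion for increments $z'\to z$ with $z'\in V_i$, with no control at all for $z'\notin V_i$. The converse-of-Rademacher theorem you invoke (Alberti--Cs\"ornyei--Preiss in the plane, De Philippis--Rindler \cite{DeRi} in general) requires genuine differentiability, and relative differentiability for all Lipschitz functions is strictly weaker: $\mathcal H^1$ restricted to a line segment in $\R^2$ is singular, yet every Lipschitz $h:\R^2\to\R$ is differentiable relative to the segment at a.e.\ point. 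The standard way to upgrade relative to full differentiability --- approximating an arbitrary increment by one landing in $V_i$ --- requires $z$ to be an $\Leb{n}$-density point of $V_i$, which presupposes $\Leb{n}(V_i)>0$; if $\nu_i$ had a singular part, $V_i$ could be Lebesgue null there, so the argument is circular exactly where it matters. This difficulty is the content of Cheeger's conjecture, and it is why the paper does not argue as you do: having established that $(U,\varphi)$ is a weak Cheeger chart, it cites De Philippis--Marchese--Rindler \cite[Theorem 1.1]{DeMaRi} (which builds on \cite{DeRi} through Alberti representations, and is designed precisely for this weak/relative notion of differentiability) to conclude $\varphi_\#(\mu|_U)\ll\Leb{n}$. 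If you replace your transfer-plus-ACP step by that citation, the rest of your argument --- including the final bookkeeping with the densities $\rho_i$ on the bi-Lipschitz pieces --- goes through as in the paper.
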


We make a few remarks: 

1) A non-trivial geodesic in a metric space $Z$ is an isometric embedding $\gamma:[a,b]\to Z$ for some $a<b$. The existence of one in some space $Z\in \mathcal C$ guarantees (under the hypotheses of Theorem \ref{thm:main-thm}) that all real valued Lipschitz functions admit a metric differential (see Lemma \ref{lem:geod-implies}), a fact which self-improves to the existence of \emph{linear} differentials (Proposition \ref{prop:kir-implies-che}). We give an elementary argument in Section \ref{sec:KDS-implies-LDS}, but this can also be shown along the lines of \cite{Ba}.

2) We could alternatively require that every Lipschitz map $X\to Y\in\mathcal C$ admits a metric differential and that there exists $f\in \LIP(X,Y)$ such that $f|_U$ is bi-Lipschitz for some $Y\in \mathcal C$. In particular, if we assume that $(X,Y)$ has the Lipschitz extension property for each $Y\in\mathcal C$, the claim of the theorem is true assuming metric differentiability of every Lipschitz map $X\to Y\in\mathcal C$.


3) A noteworthy consequence of the fact that $(U,\varphi)$ is a weak Cheeger chart (Proposition \ref{prop:kir-implies-che}) is that $\varphi_\#(\mu|_U)\ll\Leb{n}$. This is by now well-known (see \cite{DeMaRi} for a proof) and follows from the deep result of De Philippis--Rindler \cite{DeRi}. Together with the bi-Lipschitz decomposition, this implies the mutual absolute continuity of $\mu|_U$ and $\mathcal H^n|_U$, and completes the proof of rectifiability of $(U,d,\mu|_U)$.

Now, we list some straightforward consequences of Theorem \ref{thm:main-thm}. Notice that part (a) of the following corollary provides a simpler proof of \cite[Theorem 3.7]{KeMo}.

\begin{corollary}\label{cor:non-embed} $\,$
	\begin{itemize}
		\item[(a)] If $X$ is an LDS admitting a bi-Lipschitz embedding into a Euclidean space, then $(X,d,\mu)$ is rectifiable. 
		\item[(b)] More generally, let $V$ be a Banach space. If every Lipschitz map $X\to V$ is metrically differentiable, and $X$ bi-Lipschitz embeds into $V$, then $X$ is rectifiable.
		\item[(c)] If every Lipschitz map $f:X\to c_0$ is metrically differentiable, then $X$ is rectifiable. 
		\item[(d)] If every Lipschitz map from $X$ to an arbitrary target is metrically differentiable, then $X$ is rectifiable.
	\end{itemize}
\end{corollary}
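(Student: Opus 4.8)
The plan is to obtain all four statements by applying Theorem~\ref{thm:main-thm} chart-by-chart and then taking a countable union; the theorem is the only substantial input, and each item merely has to supply its four ingredients: a covering of $X$ (up to a null set) by charts satisfying \eqref{eq:lin-uniqueness}, a target class $\mathcal C$ one of whose members carries a non-trivial geodesic, metric differentiability of the relevant Lipschitz maps, and a bi-Lipschitz embedding into $\mathcal C$. For the delicate condition \eqref{eq:lin-uniqueness} I would argue uniformly: whenever the target contains a geodesic and all Lipschitz maps into it are metrically differentiable, Lemma~\ref{lem:geod-implies} yields metric differentials of real-valued Lipschitz functions, Proposition~\ref{prop:kir-implies-che} then upgrades the charts to weak Cheeger charts, and the density of directions for weak Cheeger charts (\cite[Lemma 9.1]{Ba}, combined with \cite[Lemma 2.1]{BaSp}) gives \eqref{eq:lin-uniqueness}.

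For (a), an LDS is covered up to a null set by countably many Cheeger charts $(U_i,\varphi_i)$; by definition these have unique linear differentials, which is exactly \eqref{eq:lin-uniqueness}. I would take $\mathcal C=\{\R^k:k\in\N\}$, each member of which contains line segments as geodesics. Any Lipschitz $f=(f_1,\dots,f_N):U_i\to\R^N$ extends (McShane, componentwise) to $X$, where each component is linearly differentiable on $U_i$ by \eqref{eq:cheeger-chart}; writing $L_xv=(\ud_xf_1(v),\dots,\ud_xf_N(v))$ and using $\big||a|-|b|\big|\le|a-b|$ shows that $\md_xf(v):=|L_xv|$ is a metric differential of $f$. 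Since the given embedding $X\hookrightarrow\R^M$ restricts to a bi-Lipschitz map on each $U_i$, Theorem~\ref{thm:main-thm} applies to each chart and $U_i$ is $n_i$-rectifiable; the countable union is then rectifiable. This reproves \cite[Theorem 3.7]{KeMo}.

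For (b), I would set $\mathcal C=\{V\}$, where $t\mapsto tv_0$ with $v_0$ a unit vector is a non-trivial geodesic. Reading the hypothesis through the chart covering of $X$ (cf. Section~\ref{sec:KDS-implies-LDS}), metric differentiability of every $f\in\LIP(X,V)$ together with the uniform argument of the first paragraph gives \eqref{eq:lin-uniqueness} on each chart. The bi-Lipschitz embedding $\iota:X\hookrightarrow V$ restricts to a bi-Lipschitz map on each $U_i$, so Remark~2) following Theorem~\ref{thm:main-thm} applies chart-by-chart (this sidesteps the fact that $V$-valued Lipschitz maps need not extend from $U_i$ to $X$), and $X$ is rectifiable.

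Finally, (c) is (b) specialised to $V=c_0$: by Aharoni's theorem every separable metric space bi-Lipschitz embeds into $c_0$, so the embedding hypothesis is automatic and only metric differentiability of $c_0$-valued maps is needed. Part (d) is immediate from (c), since metric differentiability for arbitrary targets includes the target $c_0$. The main obstacle I anticipate is not any single computation but the bookkeeping that matches each hypothesis to the precise input of Theorem~\ref{thm:main-thm}: in particular, verifying \eqref{eq:lin-uniqueness} in (b)--(d) through the weak Cheeger detour, and handling the non-extendability of vector-valued Lipschitz maps by invoking Remark~2) rather than the bare statement of the theorem.
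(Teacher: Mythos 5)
Your overall strategy---feeding each item into Theorem \ref{thm:main-thm} with a suitable target class---is exactly the paper's proof, which consists of the single observation that (a), (b), (d) follow with $\mathcal C=\{\R^n\}$, $\{V\}$, $\{X\times\R\}$ respectively, while (c) additionally uses the embedding theorem for $c_0$. Your expansion of (a) (McShane extension of components, $\md_xf(v):=|L_xv|$ via the reverse triangle inequality, and uniqueness of linear differentials being equivalent to \eqref{eq:lin-uniqueness} by \cite[Lemma 2.1]{BaSp}) correctly fills in what the paper calls ``immediate'', as does your appeal to Remark 2) in (b), which is genuinely needed since $V$-valued Lipschitz maps on $U$ need not extend to $X$. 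Your (d) goes through (c) and the $c_0$-embedding; the paper instead takes $\mathcal C=\{X\times\R\}$, whose single member contains the geodesic $\{x_0\}\times[0,1]$ and into which $X$ embeds isometrically as $X\times\{0\}$. Both routes work, since separability of $X$ is a standing assumption of the paper.

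There is, however, one genuinely flawed step: your ``uniform argument'' for \eqref{eq:lin-uniqueness} in (b)--(d) is circular. Proposition \ref{prop:kir-implies-che} takes \eqref{eq:lin-uniqueness} as a hypothesis (and Lemma \ref{lem:geod-implies} is stated under the standing assumption of Section \ref{metricrectif} that the chart satisfies \eqref{eq:lin-uniqueness}), so neither can be used to \emph{produce} \eqref{eq:lin-uniqueness}. Worse, the step cannot be repaired, because \eqref{eq:lin-uniqueness} simply does not follow from metric differentiability of all Lipschitz maps: take $X=[0,1]$ with Lebesgue measure and the $2$-dimensional chart $\varphi(t)=(t,0)$. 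If $f:X\to Y$ is Lipschitz and $s_t$ denotes its one-dimensional Kirchheim metric differential at $t$, then $\md_tf(v_1,v_2):=s_t(v_1)$ is a seminorm on $\R^2$ satisfying \eqref{eq:kir-chart-diff}, so every Lipschitz map into every target is metrically differentiable with respect to this chart; yet \eqref{eq:lin-uniqueness} fails for $v=e_2$, and $([0,1],|\cdot|,\Leb{1})$ is not $2$-rectifiable. The correct reading---and the one the paper implicitly uses---is that in (b)--(d) the phrase ``metrically differentiable'' presupposes a fixed countable family of charts satisfying \eqref{eq:lin-uniqueness}, exactly as in Theorem \ref{thm:main-thm}; that condition is part of the hypothesis, not something to be derived. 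With that reading your proof goes through; you should simply delete the circular detour through Proposition \ref{prop:kir-implies-che} and \cite[Lemma 9.1]{Ba}.
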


In particular, since non-Abelian Carnot groups and the Laakso space are purely unrectifiable RNP-Lipschitz differentiability spaces (see \cite{amb-kir00,Laakso,CheKlei2,Da}), they do not admit a bi-Lipschitz embedding into an RNP-Banach space. Note that (a), (b) and (d) are immediate consequences of Theorem \ref{thm:main-thm} with $\mathcal C=\{\R^n\},\{V\}, \{X\times\R\}$, 
respectively, and (c) follows readily from Theorem \ref{thm:main-thm} and the fact that every separable metric space bi-Lipschitz embeds into $c_0$ \cite[Theorem 3.12]{Hei03}. A similar conclusion holds with $c_0$ replaced by $\ell^\infty$ or $C[0,1]$, since both contain isometric copies of every separable metric space. Since all three of these are non-RNP Banach spaces, not all Lipschitz maps into them can be (linearly) differentiable.


%

\section{Preliminaries}
Throughout this paper the triplet $(X,d,\mu )$ denotes a complete separable metric space endowed with a measure $\mu$ which is Borel regular and finite on bounded sets. In particular, $\mu$ is Radon (see \cite[Corollary 3.3.47]{HKST07}).


For a mapping $f:(X,d_X)\rightarrow (Y,d_Y)$ between metric spaces we define the pointwise Lipschitz constant as
$$\mathrm{Lip}\, f(x):=\limsup_{y\to x}\frac{d_Y(f(x),f(y))}{d_X(x,y)}.$$
We denote by $\mathrm{LIP}(X,Y)$ the space of Lipschitz mappings and, for the particular case of $Y=\R$, we use the notation $\mathrm{LIP}(X)$.

In order to give a formal definition of rectifiability, we briefly recall the notion of {\em Hausdorff measure}. For $s> 0$, first fix $\delta >0$ and consider, for any set $E\subset X$,
$$\mathcal{H}_\delta^s(E):=\inf\left\{ \sum_{i=1}^\infty \mathrm{diam}(E_i)^s \right\},$$
where the infimum is taken over all countable covers of $E$ by sets $E_i\subset X$ with $\mathrm{diam}(E_i)<\delta$ for all $i\in\N$. Then the $s$-\textit{dimensional Hausdorff measure} is defined as
$$\mathcal{H}^s(E):=\lim_{\delta\to 0}\mathcal{H}_\delta^s(E).$$

Given two measures $\mu$ and $\nu$, we say that $\mu$ is {\em absolutely continuous} with respect to $\nu$ (denoted $\mu\ll\nu$) if whenever $A\subset X$ such that $\nu (A)=0$ then $\mu (A)=0$.

\begin{definition}{\em ($n$-rectifiable space)}\label{def:n-rect}
We say that  $(X,d,\mu )$ is {\em $n$-rectifiable} if $\mu\ll\mathcal H^n$ and there exists a countable collection of Lipschitz maps $\psi_i:E_i \rightarrow X$ from $\mathscr L^n$-measurable sets $E_i \subset \mathbb R^n$ with $\mu\left( X\backslash\bigcup_{i=1}^\infty \psi_i(E_i) \right) =0$.
\end{definition}

An {\em $n$-dimensional chart} on $(X,d,\mu)$ is a pair $(U,\varphi )$ such that $U\subset X$ is Borel and $\varphi :X\rightarrow \R^n$ is Lipschitz.

\begin{definition}{\em (weak Cheeger chart)}
We say that an $n$-dimensional chart $(U,\varphi )$ on $(X,d,\mu)$ is a \emph{weak Cheeger chart} if for every mapping $f\in \mathrm{LIP}(X)$ and $\mu-$almost every $x\in U$ there exists a unique linear map $\ud_xf\in (\R^n)^*$ such that
\begin{equation}\label{eq:weakcheegerchart}
		\limsup_{U\ni y\to x}\frac{|f(y)-f(x)-\ud_xf(\varphi(y)-\varphi(x))|}{d(x,y)}=0. 
	\end{equation}
\end{definition}
As mentioned in the introduction, if \eqref{eq:weakcheegerchart} holds without the restriction $y\in U$ then $(U,\varphi )$ is called a {\em Cheeger chart}. If $X$ can be decomposed, up to a $\mu$-null set, into a countable union of (weak) Cheeger charts then $X$ is called a (weak) {\em Lipschitz differentiability space}, or (weak) LDS in short. It is interesting to notice that, if porous sets in $X$ have zero measure, a weak Cheeger chart is automatically a Cheeger chart (see \cite[Remark 2.11]{Ke2} and \cite[Proposition 2.8]{BaSp}). Also, a countable union of LDS's might not be an LDS (see \cite{bate17} for an example), whereas countable unions of weak LDS are trivially a weak LDS.


In order to study mappings into a metric space where linearity is absent, a generalized definition of differentiability is necessary, and leads to the concept of {\em metric differential} introduced by Kirchheim in \cite{Kirchheim} (see Definition \ref{def:metricdifferentiability}). In this scenario, the metric differential $\md_xf$ of a mapping $f:X\rightarrow Y$ cannot be a linear map, as defined in a (weak) Cheeger chart. Instead, it is substituted by a {\em seminorm} in $\R^n$, that is, a subadditive non-negative function. More precisely, the distances $d_Y(f(y),f(x))$ behave like a seminorm (equation \eqref{eq:kir-chart-diff}) instead of $f(y)-f(x)$ behaving linearly (equation \eqref{eq:cheeger-chart}), in both cases up to a first order error.

%
%
%

In Section \ref{sec:weak-star} we will also consider the notion of {\em $w^*$-differential}, a weaker version of differentiability for maps taking values in duals to separable Banach spaces. Namely, if $V$ is a separable Banach space, we can consider the {\em $w^*$-topology} in $V^*$, that is,
$$w^*-\lim_{j\to \infty}w_j=w\Longleftrightarrow \lim_{j\to \infty}\langle w_j,v\rangle =\langle w,v\rangle \quad\forall v\in V,$$
whenever $w_j,w\in V^*$. Here $\langle\cdot ,\cdot\rangle$ denotes the standard duality $\langle v,w\rangle = w(v)$ for $v\in V$ and $ w \in V^*$.

\begin{definition}{\em ($w^*$-differentiability)}\label{def:weakdifferentiability}
	Let $(U,\varphi)$ be a 
	chart and $V$ a separable Banach space. Given a map $f:U\to V^*$, we say that $f$ is {\em $w^*$-differentiable} with respect to the chart $(U,\varphi)$ if for $\mu$-a.e. $x\in U$ there exists a unique linear map $D_xf:\R^n\to V^*$ such that
	
	\begin{align}\label{eq:w-star-differential}
		\limsup_{U\ni y\to x}\frac{|\langle v,f(y)-f(x)-D_xf(\varphi(y)-\varphi(x))\rangle|}{d(y,x)} =0\quad\textrm{for all }v\in V.
	\end{align}
\end{definition}

\section{Metric and linear differentials}\label{sec:KDS-implies-LDS}

Let $(U,\varphi)$ be an $n$-dimensional chart satisfying \eqref{eq:lin-uniqueness}. Note that for every $x\in U$ and any sequence  $(x_j) \subset U\setminus\{x\}$  with $x_j \to x$, the Lipschitz property of $\varphi$ gives that $\left(\displaystyle \frac{\varphi(x_j)-\varphi(x)}{d(x_j,x)}\right)$ has a convergent subsequence.
We denote by $L(\varphi,x)\subset \R^n$ the set of limit points of sequences $\left(\displaystyle \frac{\varphi(x_j)-\varphi(x)}{d(x_j,x)}\right)$ with $(x_j)$ as above. 

From \eqref{eq:lin-uniqueness} it follows that $L(\varphi,x)$ spans $\R^n$ for $\mu$-a.e. $x\in U$ (since $\Lip(v\cdot\varphi|_U)(x)=0$ for any $v\perp
 L(\varphi,x)$). It moreover follows from \eqref{eq:lin-uniqueness} that a function can have at most one (linear) differential
 with respect to $(U,\varphi)$, see \cite[Lemma 3.3]{Ba} and \cite[Lemma 2.1]{BaSp}. If on the other hand $f\in \LIP(X)$
 admits a metric differential with respect to $(U,\varphi)$ in the sense of Definition \ref{def:metricdifferentiability} it follows
 that, for $\mu$-a.e. $x\in U$, the metric differential $\md_xf$ is uniquely determined on $L(\varphi,x)$. In
 fact, if $\displaystyle w=\lim_{j\to\infty} \displaystyle \frac{\varphi(x_j)-\varphi(x)}{d(x_j,x)} \in L(\varphi,x),$ then $$\md_xf(w)=\lim_{j\to\infty}
 \dfrac{\md_xf(\varphi(x_j)-\varphi(x))}{d(x_j,x)}=\lim_{j\to\infty} \dfrac{|f(x_j)-f(x)|}{d(x_j,x)}\, .$$
 However if $\R L(\varphi,x)$ is not dense in $\R^n$, there may exist many seminorms $s$ on $\R^n$ with $s|_{L(\varphi,x)}=\md_xf|_{L(\varphi,x)}$. This is in contrast with linear maps, which are uniquely determined by their values on a spanning set. The density of $\R L(\varphi,x)$ holds for weak Cheeger charts by \cite[Lemma 9.1]{Ba} whose proof uses Alberti representations.

Below we show that metric differentiability of \emph{every} Lipschitz function self-improves to the existence of linear differentials.


\begin{proposition}\label{prop:kir-implies-che}
	Suppose $(U,\varphi)$ is an $n$-dimensional chart satisfying \eqref{eq:lin-uniqueness}. If every $f\in \LIP(U)$ admits a metric differential with respect to $(U,\varphi)$, then $(U,\varphi)$ is a weak Cheeger chart.
\end{proposition}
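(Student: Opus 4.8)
The plan is to reduce the existence of a \emph{linear} differential to a statement about directional derivatives along the directions realised by $\varphi$, and then to use the seminorm structure of the metric differential to pin down their signs. Fix $f\in\LIP(U)$ (the restriction of an element of $\LIP(X)$). Using subadditivity and $1$-homogeneity of metric differentials together with the reverse triangle inequality $|\md_xg_1-\md_xg_2|\le\md_x(g_1-g_2)$, the assignment $v\mapsto s_v:=\md_x(f-v\circ\varphi)$ is defined for every $v\in(\R^n)^*$ (extending from a countable dense set $D\subset(\R^n)^*$ by Lipschitz continuity in the metric $\delta$), and each $s_v$ is a seminorm on $\R^n$. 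I claim it suffices to produce, for $\mu$-a.e. $x\in U$, a linear map $\ud_xf\in(\R^n)^*$ with $s_{\ud_xf}\equiv 0$ on $L(\varphi,x)$. Indeed, the metric differential estimate \eqref{eq:kir-chart-diff} applied to $f-\ud_xf\circ\varphi$ shows that $|f(y)-f(x)-\ud_xf(\varphi(y)-\varphi(x))|/d(x,y)$ differs from $s_{\ud_xf}\big((\varphi(y)-\varphi(x))/d(x,y)\big)$ by $o(1)$ as $y\to x$; the normalised vectors $(\varphi(y)-\varphi(x))/d(x,y)$ stay bounded and accumulate only on $L(\varphi,x)$, so continuity of the seminorm and compactness force this quantity to $0$, which is exactly the weak Cheeger estimate \eqref{eq:weakcheegerchart}. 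Uniqueness of $\ud_xf$ is immediate from \eqref{eq:lin-uniqueness}: two competitors differ by a covector $u$ with $|u(w)|=\md_x(u\circ\varphi)(w)=0$ for all $w\in L(\varphi,x)$, and since $L(\varphi,x)$ spans $\R^n$ this forces $u=0$.

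Next I would construct the directional derivatives on $L(\varphi,x)$, the elementary heart of the argument. Working on the full-measure set where $\md_x(f-v\circ\varphi)$ exists for all $v\in D$, fix $w\in L(\varphi,x)$ realised by a sequence $x_j\to x$ and write $a_j:=(f(x_j)-f(x))/d(x_j,x)$. The defining limsup of the metric differential, taken over \emph{all} $y\to x$, forces $|a_j|\to\md_xf(w)=:m$ along every realising sequence. If $m>0$ and $a_j$ had two subsequential limits $\pm m$, then testing against any $v\in D$ with $c:=v(w)\neq 0$ would give $|m-c|=\md_x(f-v\circ\varphi)(w)=|{-m}-c|$, forcing $c=0$, a contradiction; hence $a_j$ converges, and the same comparison across two realising sequences shows the limit is independent of the chosen sequence. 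This defines $\partial_wf(x):=\lim_j a_j$ with $|\partial_wf(x)|=\md_xf(w)$, and a direct computation gives $s_v(w)=|\partial_wf(x)-v(w)|$ for every $v\in(\R^n)^*$ and every $w\in L(\varphi,x)$. In particular the sought-after $\ud_xf$ exists precisely when $w\mapsto\partial_wf(x)$ is the restriction to $L(\varphi,x)$ of a linear functional.

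The main obstacle is therefore the linearity of $w\mapsto\partial_wf(x)$ on $L(\varphi,x)$: subadditivity of the seminorm $s_v$ only controls its value at sums $w_1+w_2$ that need not themselves lie in $L(\varphi,x)$, so additivity of $\partial_\bullet f(x)$ does not follow formally from the reduction above. To close this gap I would invoke the richness of the realised directions: for charts satisfying \eqref{eq:lin-uniqueness} the set $\R L(\varphi,x)$ is dense in $\R^n$ for $\mu$-a.e.\ $x$ (this is \cite[Lemma 9.1]{Ba}, proved via Alberti representations), and differentiation of $f$ and $\varphi$ along the curves of these representations yields additivity of $\partial_\bullet f(x)$ on a spanning family of directions. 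Combined with $1$-homogeneity and the continuity inherited from $|\partial_wf(x)|=\md_xf(w)$, this identifies $\partial_\bullet f(x)$ with a unique linear functional $\ud_xf$, whose Borel dependence on $x$ follows from the measurability of $x\mapsto\md_xf$.

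Feeding this $\ud_xf$ back into the reduction of the first paragraph then shows that every $f\in\LIP(X)$ is differentiable with respect to $(U,\varphi)$ on $U$, so $(U,\varphi)$ is a weak Cheeger chart. I expect the additivity step of the third paragraph to be the only non-formal ingredient: the sign-detection argument of the second paragraph is entirely elementary and sequence-wise, and the passage from a linear functional on $L(\varphi,x)$ to the weak Cheeger estimate is a soft compactness-and-continuity argument.
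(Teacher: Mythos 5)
Your first two paragraphs are correct, and they coincide with the paper's own proof: the reduction of the weak Cheeger estimate to producing a linear map that agrees with the directional derivatives on $L(\varphi,x)$, and the sequence-wise construction of $\partial_wf(x)=L_f(w)$ with $s_v(w)=|L_f(w)-v(w)|$, are exactly the paper's steps (there written with rational test covectors $w\in\Q^n$). The genuine gap is in your third paragraph, the one you yourself single out as the crux: the appeal to \cite[Lemma 9.1]{Ba} is circular. That lemma is a statement about (weak) Cheeger charts, and its proof runs through Alberti representations of $\mu|_U$, whose existence is Bate's structure theorem \emph{for Lipschitz differentiability spaces} --- that is, it presupposes exactly the linear differentiability that Proposition \ref{prop:kir-implies-che} is trying to establish. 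This is precisely why the present paper records the density of $\R L(\varphi,x)$ only ``a posteriori'', as Corollary \ref{cor:dense-directions}, obtained by combining the already-proved Proposition \ref{prop:kir-implies-che} with \cite[Lemma 9.1]{Ba}. Condition \eqref{eq:lin-uniqueness} by itself yields only that $L(\varphi,x)$ spans $\R^n$ almost everywhere; it provides neither dense directions nor any curve fragments carrying $\mu|_U$, so there are no ``curves of these representations'' to differentiate along at this stage of the argument.

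Moreover, the obstacle you diagnose --- that subadditivity only sees sums lying in $L(\varphi,x)$ --- is not actually an obstacle, because each $s_v$ is a seminorm on \emph{all} of $\R^n$; one only has to test with well-chosen covectors from your countable set $D$. This is how the paper closes the gap: its steps (a) and (b) show that for $w_1,w_2,w_1+w_2\in L(\varphi,x)$ (with $w_2\neq -w_1$; that case follows from homogeneity) one may pick a rational covector $u$ with $u\cdot z>0$ for $z=w_1,w_2,w_1+w_2$ and a large rational $t>0$ so that $L_f(z)+tu\cdot z>0$ and $L_f(z)-tu\cdot z<0$ for all three $z$; testing the subadditivity of $s_{\mp tu}$ then removes the absolute values and gives additivity in both directions. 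In fact the same testing idea settles directly the extension problem you worry about: fix a basis $w_1,\dots,w_n\in L(\varphi,x)$, let $L$ be the linear map with $L(w_i)=L_f(w_i)$, and for any $u=\sum_ic_iw_i\in L(\varphi,x)$ use subadditivity and homogeneity of the seminorm $s_v$ to get $|L_f(u)-v(u)|=s_v(u)\le\sum_i|c_i|\,|L_f(w_i)-v(w_i)|$ for every $v\in D$; letting $v\to L$ along $D$ makes the right-hand side vanish and yields $L_f(u)=L(u)$. Feeding this $L$ into your first paragraph completes the proof with no Alberti representations anywhere; replacing your third paragraph by this rational-testing argument would make your proof correct and essentially identical to the paper's.
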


Combining this with \cite[Lemma 9.1]{Ba} we have the following immediate corollary.
\begin{corollary}\label{cor:dense-directions}
Under the hypotheses of Proposition \ref{prop:kir-implies-che}, $\R L(\varphi,x)$ is dense in $\R^n$ for $\mu$-a.e. $x\in U$.
\end{corollary}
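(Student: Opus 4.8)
The plan is to deduce the corollary directly from the two facts already at hand, so the first step is simply to feed the hypotheses into Proposition \ref{prop:kir-implies-che}. Since $(U,\varphi)$ is an $n$-dimensional chart satisfying \eqref{eq:lin-uniqueness} and every $f\in\LIP(U)$ admits a metric differential with respect to $(U,\varphi)$, that proposition tells us $(U,\varphi)$ is in fact a weak Cheeger chart. This is the only place the hypotheses are used; once weak Cheeger differentiability is in hand, the metric differentials and the target spaces play no further role.

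Second, I would invoke \cite[Lemma 9.1]{Ba}, which asserts precisely that for a weak Cheeger chart $(U,\varphi)$ the span $\R L(\varphi,x)$ is dense in $\R^n$ for $\mu$-a.e. $x\in U$. Combining the two steps yields the claim. For context I would recall the mechanism behind Lemma 9.1, since it explains why the statement is not merely formal: a weak Cheeger chart of dimension $n$ supports, by Bate's structure theory, Alberti representations of $\mu|_U$ whose $\varphi$-directions can be prescribed within arbitrarily narrow cones covering a dense subset of $S^{n-1}$. Each such representation provides, at $\mu$-a.e. $x$, a curve fragment through $x$ along which $\varphi$ moves to first order in the prescribed direction; reading off points $x_j\to x$ along this fragment inside $U$ and normalizing by $d(x_j,x)$ exhibits a limit point of $\left(\frac{\varphi(x_j)-\varphi(x)}{d(x_j,x)}\right)$ lying, up to a positive scalar, in that cone, hence an element of $\R L(\varphi,x)$ pointing in the corresponding direction.

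To finish, I would fix a countable dense family of directions in $S^{n-1}$, associate to each a narrow cone and an Alberti representation as above, and intersect the corresponding full-measure sets. On the resulting set of full $\mu$-measure, $\R L(\varphi,x)$ meets every cone in the family and is therefore dense in $\R^n$, which is exactly the assertion.

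Since both ingredients are available to us — Proposition \ref{prop:kir-implies-che} is proved here and Lemma 9.1 is cited — there is essentially no obstacle: the corollary is a formal consequence of the combination. The only substantive care lies internally to Lemma 9.1, namely ensuring that the Alberti curve fragments have positive metric speed at $x$ so that the normalization converges to a nonzero vector in the prescribed direction, and that these fragments may be taken inside $U$ so as to match the definition of $L(\varphi,x)$ via sequences $x_j\to x$ in $U$. As this is handled in \cite{Ba}, I would use it as a black box.
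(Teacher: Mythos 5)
Your proposal matches the paper's argument exactly: the corollary is obtained by applying Proposition \ref{prop:kir-implies-che} to conclude that $(U,\varphi)$ is a weak Cheeger chart and then invoking \cite[Lemma 9.1]{Ba} as a black box, which is precisely what the paper does (it even notes, as you do, that Bate's lemma rests on Alberti representations). Your additional sketch of the mechanism behind Lemma 9.1 is accurate but not needed; the combination of the two cited results is the whole proof.
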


\begin{proof}[Proof of Proposition \ref{prop:kir-implies-che}]
Let $f\in\LIP(X)$. By assumption there exists a $\mu$-null set $N\subset U$ such that $L(\varphi,x)$ spans $\R^n$ and there are seminorms $\md_x(f+w\cdot \varphi)$ satisfying
\[
\limsup_{U\ni y\to x}\frac{||f(y)-f(x)+w\cdot(\varphi(y)-\varphi(x))|-\md_x(f+w\cdot \varphi)(\varphi(y)-\varphi(x))|}{d(y,x)}=0
\]
for every $x\in U\setminus N$ and $w\in \Q^n$. We fix $x\in U\setminus N$.

If $v=\lim_{j\to\infty}\frac{\varphi(x_j)-\varphi(x)}{d(x_j,x)}\in L(\varphi,x)$, and  $a(f,x,v)=\lim_{j\to\infty}\frac{f(x_j)-f(x)}{d(x_j,x)}$ exists, then  $$|a(f,x,v)+w\cdot v|=\md_x(f+w\cdot \varphi)(v)$$ for all $w\in \Q^n$. If we consider now a different sequence $x'_j \to x$ such that $v=\lim_{j\to\infty}\frac{\varphi(x'_j)-\varphi(x)}{d(x'_j,x)}$ and $a'(f,x,v)=\lim_{j\to\infty}\frac{f(x'_j)-f(x)}{d(x'_j,x)}$ exists, then 
$$|a'(f,x,v)+w\cdot v|=|a(f,x,v)+w\cdot v|$$
for all $w\in \Q^n$ and thus we conclude that $a'(f,x,v)=a(f,x,v)$. It follows that the map $L_f:L(\varphi,x)\to\R$ given by
\begin{align*}
L_f(v):=\lim_{j\to\infty}\frac{f(x_j)-f(x)}{d(x_j,x)},\textrm{ whenever }v=\lim_{j\to\infty}\frac{\varphi(x_j)-\varphi(x)}{d(x_j,x)}
\end{align*}
is well-defined 
and satisfies
\begin{align}\label{eq:w-identity}
	\md_x(f+w\cdot\varphi)(v)=|L_f(v)+w\cdot v|\textrm{ for all } w\in \Q^n.
\end{align}
We prove that
\begin{itemize}
	\item[(a)] $L_f(t v)=t L_f(v)$ if $v, t v\in L(\varphi,x)$, and
	\item[(b)] $L_f(v+v')=L_f(v)+L_f(v')$ if $v,v',v+v'\in L(\varphi)$.
\end{itemize}
If $v,v'\in L(\varphi,x)$ satisfy $v'=t v$, $t \in\R$, then $$|L_f(v')+w\cdot v'|=\md_x(f+w\cdot\varphi)(v')=|t|\md_x(f+w\cdot\varphi)(v)=|t||L_f(v)+w\cdot v|.$$ Thus $|L_f(v')+t w\cdot v|=|t L_f(v)+t w\cdot v|$ for all $w\in \Q^n$, implying (a). Next suppose $v,v',v+v'\in L(\varphi,x)$. From \eqref{eq:w-identity} and the fact that $\md_x(f+w\cdot\varphi)$ is a seminorm we obtain that
\begin{align*}
|L_f(v+v')+w\cdot(v+v')|\le |L_f(v)+w\cdot v|+|L_f(v')+w\cdot v'|,\quad w\in \mathbb{Q}^n.
\end{align*}
If $v'=-v$, then $L_f(v+v')=0=L_f(v)+L_f(v')$ by (a). Otherwise, there exists $w\in \Q^n$ such that $w\cdot z>0$ for $z=v,v',v+v'$. By multiplying $w$ by a sufficiently large positive number we find $w^+$ such  that $L_f(z)+w^+\cdot z>0$, $z=v,v',v+v'$. From the inequality above we obtain $L_f(v+v')\le L_f(v)+L_f(v').$ Similarly, by multiplying $w$ by a suitably large (in absolute value) negative number we find $w^-$ such that $L_f(z)+w^-\cdot z<0$, $z=v,v',v+v'$, yielding $-L_f(v+v')\le -L_f(v)-L_f(v')$. These two inequalities together prove (b).

Since $L(\varphi,x)$ spans $\R^n$ and $L_f$ satisfies (a) and (b), there exist a linear map $L:\R^n\to \R$ such that $L|_{L(\varphi,x)}=L_f$. It follows that $L$ is the Cheeger differential of $f$ at x with respect to $(U,\varphi)$. Indeed, otherwise there would exist $\varepsilon_0>0$ and a sequence $U\ni x_j\to x$ with $v:=\lim_{j\to\infty}\frac{\varphi(x_j)-\varphi(x)}{d(x_j,x)}$ such that
\begin{align*}
	\varepsilon_0\le \frac{|f(x_j)-f(x)-L(\varphi(x_j)-\varphi(x))|}{d(x_j,x)}\stackrel{j\to\infty}{\longrightarrow} |L_f(v)-L(v)|,
\end{align*}
contradicting the fact that $L_f(v)=L(v)$.
\end{proof}

\section{Metric differential and rectifiability}\label{metricrectif}



Throughout this section, we fix an $n$-dimensional chart $(U,\varphi)$ satisfying \eqref{eq:lin-uniqueness}.
\begin{lemma}\label{lem:geod-implies}
	Suppose $\mathcal C$ is a collection of metric spaces containing a space with a non-trivial geodesic, and that every Lipschitz map $U\to Y\in\mathcal C$ admits a metric differential with respect to $(U,\varphi)$. Then every $f\in\LIP(U)$ admits a metric differential with respect to $(U,\varphi)$.
\end{lemma}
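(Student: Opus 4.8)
The plan is to factor each real-valued Lipschitz function through the given geodesic, producing a map into $\mathcal C$ whose metric differential (supplied by the hypothesis) can be read back as a metric differential of $f$. The underlying point is that an isometric embedding $\gamma:[a,b]\to Y$ turns the increment $|f(y)-f(x)|$ into an honest distance in $Y$, as long as $f(x)$ and $f(y)$ both lie in $[a,b]$. Since metric differentiability is a pointwise condition involving only $y$ close to $x$, and $f$ is continuous, it is enough to arrange this membership locally.

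Concretely, fix $f\in\LIP(U)$ and a non-trivial geodesic $\gamma:[a,b]\to Y$ with $Y\in\mathcal C$ and $\ell:=b-a>0$. First I would choose a countable family of open intervals $\{J_k\}$ covering $\R$, each of length $<\ell$ and with $\overline{J_k}\subset(a_k,b_k)$ for a translate $[a_k,b_k]$ of $[a,b]$. For each $k$ let $P_k:\R\to[a_k,b_k]$ be the $1$-Lipschitz nearest-point projection and $\gamma_k:[a_k,b_k]\to Y$, $\gamma_k(t)=\gamma(t-a_k+a)$, the corresponding translated geodesic, which is again an isometric embedding. Then $g_k:=\gamma_k\circ P_k\circ f\colon U\to Y$ is Lipschitz, so by hypothesis it admits a Borel metric differential $\md g_k$ with respect to $(U,\varphi)$.

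Next I would set $U_k:=f^{-1}(J_k)$ and establish the key identity: if $x\in U_k$ then $f(x)\in J_k$ with $\overline{J_k}\subset(a_k,b_k)$, so by continuity there is $r_x>0$ with $f(y)\in(a_k,b_k)$ whenever $y\in U$ and $d(x,y)<r_x$; for such $y$ the projection $P_k$ is the identity and $\gamma_k$ acts isometrically, giving $d_Y(g_k(y),g_k(x))=|f(y)-f(x)|$. Substituting this into \eqref{eq:kir-chart-diff} for $g_k$ shows that for $\mu$-a.e.\ $x\in U_k$,
\[
\limsup_{U\ni y\to x}\frac{\big||f(y)-f(x)|-\md_xg_k(\varphi(y)-\varphi(x))\big|}{d(x,y)}=0,
\]
which is precisely \eqref{eq:kir-chart-diff} for $f$ at $x$. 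Writing $U$ as the disjoint Borel union of $V_k:=U_k\setminus\bigcup_{j<k}U_j$, the Borel map $\md f:=\sum_k\mathbf 1_{V_k}\,\md g_k$ is then a metric differential of $f$, since $\mu$-a.e.\ point of $U$ lies in some $V_k\subset U_k$ where the corresponding $g_k$ is metrically differentiable.

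The only genuine content is the local isometric identity $d_Y(g_k(y),g_k(x))=|f(y)-f(x)|$, which is where the existence of the geodesic enters. The rest is bookkeeping, and the main thing to be careful about is the choice of the cover $\{J_k\}$: the compact containment $\overline{J_k}\subset(a_k,b_k)$ is exactly what makes $P_k$ inactive near points of $U_k$, and hence what lets the differential of $g_k$ descend to $f$. Verifying Borel measurability of the patched differential is then routine.
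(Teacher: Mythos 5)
Your proof is correct, and it shares the paper's core idea---factor $f$ through the geodesic to manufacture a Lipschitz map into $Y\in\mathcal C$, then read its metric differential back as one for $f$---but the implementation is genuinely different. The paper resolves the mismatch between the unbounded range of $f$ and the bounded geodesic \emph{globally}: it fixes a Lipschitz diffeomorphism $h:\R\to(a,b)$, applies the hypothesis to $\gamma\circ h\circ f$ to get a metric differential of $h\circ f$, and then undoes $h$ by a chain-rule computation at $h(f(x))$, arriving at the explicit formula $\md f=|(h^{-1})'\circ h\circ f|\,\md(h\circ f)$; the delicate points there are that $h^{-1}$ is differentiable though not globally Lipschitz, and that $o(|h(f(y))-h(f(x))|)=o(d(x,y))$ because $h\circ f$ is Lipschitz. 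You resolve the mismatch \emph{locally}: you cover $\R$ by short intervals compactly contained in translates of $[a,b]$, truncate by nearest-point projections so that each $g_k=\gamma_k\circ P_k\circ f$ is an admissible map into the same $Y\in\mathcal C$, observe that metric differentiability at $x$ only involves $y$ near $x$---where $P_k$ is inactive and $\gamma_k$ acts isometrically, so $d_Y(g_k(y),g_k(x))=|f(y)-f(x)|$ holds exactly---and then patch the differentials over the disjoint Borel decomposition $\{V_k\}$. Your route needs no chain rule and no correction factor (on $V_k$ the differential of $g_k$ simply \emph{is} the differential of $f$), at the price of the countable cover, the disjointification, and the measurability bookkeeping; the paper's route is a single global composition with an explicit closed-form answer, at the price of the differentiation step. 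Both arguments are complete proofs of the lemma.
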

\begin{proof}
	Let $\gamma:[a,b]\to Y\in\mathcal C$ be a non-trivial geodesic, and let $h:\R\to (a,b)$ be a Lipschitz diffeomorphism. By assumption, for any $f\in \LIP(U)$ the map $\tilde f=\gamma\circ h\circ f$ admits a metric differential. Since $d_Y(\tilde f(y),\tilde f(x))=|h(f(y))-h(f(x))|$ for each $x,y\in U$, it follows that $h\circ f\in \LIP(U)$ admits a metric differential. However if $\md(h\circ f)$ denotes the metric differential of $h\circ f$, we have for $\mu$-a.e. $x\in U$
\begin{align*}
	|f(y)-f(x)|&=|h^{-1}(h\circ f(y))-h^{-1}(h\circ f(x))|\\
	&=|(h^{-1})'(h\circ f(x))(h\circ f(y)-  h\circ f(x))| +o(|h\circ f(y)-h\circ f(x)|)\\
	&=|(h^{-1})'(h\circ f(x))|\md_x(h\circ f)(\varphi(y)-\varphi(x))+o(d(y,x))
\end{align*}
which implies that $\md f:=|(h^{-1})'\circ h\circ f|\md(h\circ f)$ is a metric differential of $f$. This proves the claim.
\end{proof}


We now give the proof of the main result.

\begin{proof}[Proof of Theorem \ref{thm:main-thm}] By hypothesis, there exists a bi-Lipschitz embedding $f:U\to Y$ of $U$ into some $(Y,d_Y)\in \mathcal C$, which admits a metric differential $\md f$ with respect to $(U,\varphi)$. Denote the bi-Lipschitz constant of $f$ by $L$. In particular by \eqref{eq:kir-chart-diff} we have
	\begin{equation}\label{mdlimit}
		\lim_{U\ni y\to x}\frac{\vert d_Y(f(x),f(y))-\mathrm{md}_xf(\varphi (y)-\varphi (x))\vert }{d(x,y)}=0
	\end{equation}
	for $\mu-$almost every $x\in U$. Let $N\subset U$ be a null set such that \eqref{mdlimit} holds for all $x\in U\backslash N$ and rewrite the limit in \eqref{mdlimit} as follows:
	$$\lim_{j\to \infty}F_j(x)=0\quad\text{where}\quad F_j(x):=\sup_{y\in B(x,\frac{1}{j})\cap U}\frac{\vert d_Y(f(x),f(y))-\mathrm{md}_xf(\varphi (x)-\varphi (y))\vert }{d(x,y)}.$$
	By Egorov's theorem for every $\varepsilon>0$ there exists a set $K\subset U\setminus N$ with $\mu(U\setminus K)<\varepsilon$ so that $\md_xf$ exists for every $x\in K$, and $F_j\to 0$ uniformly on $K$. Since $\mu$ is Radon we may further assume that $K$ is compact. Let $j_0\in\N$ be such that
	\begin{equation*}\label{uniformlimit2}
		\sup_{y\in B(x,\frac{1}{j_0})\cap K}\frac{\vert d_Y(f(x),f(y))-\mathrm{md}_xf(\varphi (x)-\varphi (y))\vert }{d(x,y)}\leq F_{j_0}(x)\leq \frac{1}{2L},\quad x\in K.
	\end{equation*}
	In particular, for any $x,y\in K$ with $d(x,y)<1/j_0$ we have
	\[
	\frac 1Ld(x,y)-\mathrm{md}_xf(\varphi (x)-\varphi (y))\leq d_Y(f(y),f(x))-\mathrm{md}_xf(\varphi (x)-\varphi (y))\le \frac{1}{2L}d(x,y)
	\]
	from which we obtain
	\[
	d(x,y)\le 2L\md_xf(\varphi(y)-\varphi(x))\le 2LC|\varphi (x)-\varphi (y)|,
	\]
	where $C:=\sup_{|v|\le 1}\md_xf(v)$.
	Consider a covering of $K$ by balls $\{B(x,\frac{1}{2j_0})\}_{x\in K}$. By compactness, there exist $x_1,x_2,\cdots,x_N$ such that $K\subset \bigcup_{i=1}^N B(x_i,\frac{1}{2j_0}).$
	Choose $x_0\in\{x_1,x_2,\cdots,x_N\}$ such that $\mu(K\cap B(x_0,\frac{1}{2j_0}))>0$, and
	define  $$
	A_k:=\{x\in K:\frac{1}{k}\leq \sup_{|v|\le 1}\md_xf(v)\leq k\}.
	$$
	
	Recall that $f$ is the restriction of a bi-Lipschitz mapping on $K$, so that $\sup_{|v|\le 1}\md_xf(v)\neq 0$ for all $x\in K$, implying $\bigcup_{k=1}^{\infty}A_k = K$. Thus there exists $k\geq 1$ such that $\mu(A_k\cap K\cap B(x,\frac{1}{2j_0}))>0$. Now, let $x,y\in  A_k \cap  K\cap B(x_0,\frac{1}{2j_0})$. Since $y\in A_k \cap K\cap B(x,\frac{1}{j_0})$ we have that
	\[
	d(x,y)\leq 2 Lk |\varphi (x)-\varphi (y)|,
	\]
	that is, $\varphi$ is injective on $A_k \cap K\cap B(x_0,\frac{1}{2j_0})$ and $\varphi^{-1}$ is $2Lk$-Lipschitz on $\varphi(A_k\cap K\cap B(x_0,\frac{1}{2j_0}))$. In particular, $\varphi$ is bi-Lipschitz on $A_k\cap K\cap B(x_0,\frac{1}{2j_0})$. By \cite[Proposition 3.1.1]{Kei}, there exists a countable decomposition
	$$
	U=Z \cup \bigcup_i V_i,
	$$
	where $\mu(Z)=0$ and $\left\{V_i\right\}$ is a collection of mutually disjoint measurable sets such that $\varphi|_{V_i}$ is bi-Lipschitz for each $i$.
	
	To finish the proof, we note that $(U,\varphi)$ is a weak Cheeger chart of dimension $n$ by Lemma \ref{lem:geod-implies} and Proposition \ref{prop:kir-implies-che}. In particular $\varphi_\#(\mu|_U)\ll \Leb{n}$ \cite[Theorem 1.1]{DeMaRi}. Writing $\varphi_\#(\mu|_{V_i})=\rho_i\Leb{n}$ for each $V_i$ as above, we obtain that
	\[
	\mu|_{V_i}=(\rho_i\circ\varphi^{-1})\varphi^{-1}_\#(\Leb{n}|_{\varphi(V_i)})\simeq \mathcal H^n|_{V_i}.
	\]
	Consequently $\mu|_U=\sum_i\mu|_{V_i}\simeq \sum_i\mathcal H^n|_{V_i}=\mathcal H^n|_U$.
\end{proof}


Next we prove a ``converse'' \footnote{Assuming porous sets in $X$ have measure zero, if X is rectifiable, it can be decomposed into a countable union of charts with respect to which every $f:X\to Y$ admits a metric differential.} of Corollary \ref{cor:non-embed} (d), namely that Lipschitz maps from a rectifiable space to arbitrary targets admit metric differentials. This result can be considered folklore, but we record the statement and its proof below for the reader's convenience.


\begin{proposition}\label{prop:weakKirchheim}
	Suppose $(U,\varphi)$ be a $n$-dimensional chart in a metric measure space $(X,d,\mu)$ such that $\varphi|_U$ is bi-Lipschitz and $\mu|_U\ll\mathcal H^n$ 
	Then, for any metric space $Y$, every Lipschitz map $f\in \LIP(U,Y)$ admits a metric differential with respect to $(U,\varphi)$.
\end{proposition}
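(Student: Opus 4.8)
The plan is to reduce the statement to Kirchheim's theorem by transporting everything to $\R^n$ through the bi-Lipschitz chart map. First I would set $E:=\varphi(U)\subset\R^n$ and $\psi:=(\varphi|_U)^{-1}:E\to U$. Since $U$ is Borel in the Polish space $X$ and $\varphi|_U$ is continuous and injective, the Lusin--Souslin theorem ensures that $E$ is Borel, hence $\Leb{n}$-measurable, and that $\psi$ is bi-Lipschitz. The composition $g:=f\circ\psi\in\LIP(E,Y)$ is then a Lipschitz map defined on a measurable subset of $\R^n$, so Kirchheim's theorem applies: for $\Leb{n}$-a.e. $u\in E$ there is a seminorm $\md_u g$ on $\R^n$ with $d_Y(g(w),g(u))=\md_u g(w-u)+o(|w-u|)$ as $E\ni w\to u$ (the one-point version following from Kirchheim's two-point estimate by setting one argument equal to $u$).

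Next I would define the candidate metric differential of $f$ by pulling back, namely $\md_x f:=\md_{\varphi(x)}g$ for $x\in U$, which is a seminorm on $\R^n$ by construction. To verify the chart condition \eqref{eq:kir-chart-diff}, fix such an $x$ and any $U\ni y\to x$, and put $u=\varphi(x)$, $w=\varphi(y)\in E$; then $f(x)=g(u)$, $f(y)=g(w)$, and $\varphi(y)-\varphi(x)=w-u$. Kirchheim's estimate gives $|d_Y(f(y),f(x))-\md_x f(\varphi(y)-\varphi(x))|\le \varepsilon|\varphi(y)-\varphi(x)|$ once $|w-u|$ is small enough. Since $\varphi$ is, say, $\Lambda$-Lipschitz, $|\varphi(y)-\varphi(x)|\le\Lambda\, d(x,y)$, so the error is $o(d(x,y))$; moreover $\varphi(y)\to\varphi(x)$ as $y\to x$ guarantees that we eventually enter the smallness regime. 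Thus $f$ admits $\md f$ as a metric differential at every $x\in U$ for which $\varphi(x)$ is a point of Kirchheim differentiability of $g$.

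The main point --- and the step where the hypothesis $\mu|_U\ll\mathcal H^n$ is essential --- is to upgrade ``$\Leb{n}$-a.e. $u\in E$'' to ``$\mu$-a.e. $x\in U$''. Let $N\subset E$ be the $\Leb{n}$-null set off which $\md g$ exists. Because $\mathcal H^n$ and $\Leb{n}$ have the same null sets in $\R^n$, we have $\mathcal H^n(N)=0$, and since $\psi$ is Lipschitz we get $\mathcal H^n(\psi(N))\le(\Lip\psi)^n\,\mathcal H^n(N)=0$. As $\psi(N)=\varphi^{-1}(N)\cap U$ and $\mu|_U\ll\mathcal H^n$, the exceptional set $\varphi^{-1}(N)\cap U$ is $\mu$-null, so the metric differential of $f$ exists for $\mu$-a.e. $x\in U$. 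Finally, Borel measurability of $x\mapsto\md_x f=\md_{\varphi(x)}g$ follows from the continuity of $\varphi|_U$ together with the Borel measurability of $u\mapsto\md_u g$ furnished by Kirchheim's theorem. The only genuinely delicate ingredient is this null-set transfer, which is precisely why bi-Lipschitzness of $\varphi|_U$ (to control $\mathcal H^n$ under $\psi$) and the absolute continuity $\mu|_U\ll\mathcal H^n$ are both needed; the remainder is bookkeeping with the $o(\cdot)$ terms.
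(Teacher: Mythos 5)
Your proof is correct and takes essentially the same route as the paper: conjugate by the chart map, i.e.\ apply Kirchheim's theorem to $g=f\circ(\varphi|_U)^{-1}$ on $\varphi(U)\subset\R^n$, pull back the seminorm via $\md_xf:=\md_{\varphi(x)}g$ using the Lipschitz bound $|\varphi(y)-\varphi(x)|\le \Lambda\, d(x,y)$ to control the error term, and transfer the exceptional $\Leb{n}$-null set to a $\mu$-null set using the Lipschitzness of $(\varphi|_U)^{-1}$ together with $\mu|_U\ll\mathcal H^n$ (equivalently, $\varphi_\#(\mu|_U)\ll\mathcal H^n|_{\varphi(U)}$, which is what the paper invokes). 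Your write-up is in fact slightly more careful than the paper's on two minor points: the measurability of $\varphi(U)$ (via Lusin--Souslin) and the Borel measurability of $x\mapsto\md_xf$.
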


\begin{proof}
	Let $g=f\circ \varphi^{-1} :\varphi(U)\rightarrow Y$. Notice that $g$ is a composition of Lipschitz mappings, so it is also Lipschitz. By Kirchheim's Rademacher Theorem \cite[Theorem 2]{Kirchheim}, for $\mathcal{H}^n$-almost every $z\in \varphi(U)$ there exists a unique seminorm $ \mathrm{md}_zg$ on $\R^{n}$ such that
	\begin{equation}\label{medieq}
		\lim_{\stackrel{y\to z}{y\in \varphi(U) }}\frac{|d_Y(g(z),g(y))-\mathrm{md}_zg(y-z)|}{|y-z|}=0.
	\end{equation}
	On the other hand, $g(\varphi (x))=f(x)$ for each $x\in
	U$. Fix $x_0\in U$ such that for $z_0:= \varphi(x_0)$ there exists a unique seminorm $ \mathrm{md}_{z_0}g$ on $\R^{n}$ such that \eqref{medieq} holds. As $\varphi$ is continuous, if $x\in U$ and $x\to x_0$, then $\varphi(x)\to z_0$. Therefore
	\begin{eqnarray*}
		&&\lim_{\underset{ x\in U }{x\to x_0 }}\frac{|d_Y(f(x_0),f(x))-\mathrm{md}_zg(\varphi(x)-\varphi(x_0))|}{d(x,x_0)} \\
		&\leq &\lim_{\underset{z=\varphi(x)}{z\to z_0}}\frac{\vert d_Y(g(z_0),g(z))-\mathrm{md}_{z_0}g(z-z_0)\vert}{\frac{1}{C}|z-z_0|}=0.
	\end{eqnarray*}
	where  $C$ is the Lipschitz constant of $\varphi$. Then $\mathrm{md}_{x_0}f:=\mathrm{md}_{\varphi(x_0)}g$ is the metric differential of $f$ at  $x_0\in U$. We finish the proof by noticing that, because $\varphi_\# (\mu_{| U}) \ll \mathcal H^{n}_{|\varphi (U)}
	$ and
	\[
	\mathcal{H}^{n}(\{z_0\in \varphi(U): \mathrm{md}_{z_0}g \text{ does not exist}\})=0,
	\]
	we conclude that
	\[
	\mu(\{x_0\in U: \mathrm{md}_{x_0}f \text{ does not exist}\})=0.
	\]
	
\end{proof}

\section{Metric and $w^*$-differentials}\label{sec:weak-star}

Differentiability of real valued Lipschitz functions gives rise to $w^*$-differentials of Lipschitz maps into the dual of a separable Banach spaces. This insight was made explicit\footnote{The authors state in \cite{amb-kir00} that $w^*$-differentiability of Lipschitz maps from $\R^n$ is a folklore result.} in \cite{amb-kir00}, where it was shown that the $w^*$-differential is compatible with the metric differential of Kirchheim, see \cite[Theorem 3.5]{amb-kir00}. In Propositions \ref{prop:w-star-differential} and \ref{prop:w-star-vs-metr-diff} below we establish the compatibility of metric and $w^*$-differentials in the setting of (weak) Cheeger charts.

\begin{proposition}\label{prop:w-star-differential}
	Let $(U,\varphi)$ be a weak Cheeger chart and $V$ a separable Banach space. Given $f\in \LIP(U,V^*)$, $f$ admits a $w^*$-differential $D_xf$ with respect to $(U,\varphi)$ for $\mu$-a.e. $x\in U$.
	
\end{proposition}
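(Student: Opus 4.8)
The plan is to reduce the vector-valued statement to the scalar differentiability supplied by the weak Cheeger chart, applied to the functions $f_v:=\langle v,f(\cdot)\rangle$, $v\in V$, and then to reassemble the scalar differentials into a single linear map $D_xf:\R^n\to V^*$. The immediate obstruction is that each $f_v$ is differentiable only off a null set depending on $v$, while $V$ is uncountable. To deal with this I would fix a countable $\Q$-linear subspace $W\subset V$ that is dense in $V$ (possible since $V$ is separable). For $w\in W$ the function $f_w$ lies in $\LIP(U)$ with $\Lip(f_w)\le\|w\|\Lip(f)$; extending it to $\tilde f_w\in\LIP(X)$ by McShane (which does not affect the differentiation limit, taken over $U\ni y\to x$) and invoking the weak Cheeger property produces a unique $\ud_xf_w\in(\R^n)^*$ off a null set $N_w$. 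Setting $N:=\bigcup_{w\in W}N_w$, enlarged by a further null set so that $L(\varphi,x)$ spans $\R^n$ (equivalently \eqref{eq:lin-uniqueness} holds) for every $x\in U\setminus N$, the assignment $w\mapsto\ud_xf_w$ is $\Q$-linear on $W$ for each such $x$, since differentiation is linear wherever all the differentials exist.

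The key step, and the main obstacle, is to upgrade this $\Q$-linear assignment on $W$ to a bounded $\R$-linear map $T_x:V\to(\R^n)^*$, which requires a bound on $\|\ud_xf_w\|$ that is \emph{uniform in} $w$. Here I would use that $L(\varphi,x)$ spans $\R^n$: the map $u^*\mapsto\sup_{u\in L(\varphi,x)}|u^*(u)|$ is then a norm on the finite-dimensional space $(\R^n)^*$, hence equivalent to the Euclidean norm, so there is $C_x<\infty$ with $\|u^*\|_{(\R^n)^*}\le C_x\sup_{u\in L(\varphi,x)}|u^*(u)|$. For any $u=\lim_j(\varphi(x_j)-\varphi(x))/d(x_j,x)\in L(\varphi,x)$, differentiability gives $\ud_xf_w(u)=\lim_j(f_w(x_j)-f_w(x))/d(x_j,x)$, whence $|\ud_xf_w(u)|\le\Lip(f_w)\le\|w\|\Lip(f)$; therefore $\|\ud_xf_w\|_{(\R^n)^*}\le C_x\Lip(f)\|w\|$. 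By continuity the $\Q$-linear map $w\mapsto\ud_xf_w$ extends uniquely to $T_x\in\mathcal L(V,(\R^n)^*)$ with $\|T_x\|\le C_x\Lip(f)$. Dualizing, I define $D_xf:\R^n\to V^*$ by $\langle v,D_xf(u)\rangle:=(T_xv)(u)$; the bound $|(T_xv)(u)|\le\|T_x\|\,\|v\|\,|u|$ shows $D_xf(u)\in V^*$ and that $D_xf$ is a bounded linear map.

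It remains to verify \eqref{eq:w-star-differential} and uniqueness. For $w\in W$ one has $T_xw=\ud_xf_w$, so $\langle w,f(y)-f(x)-D_xf(\varphi(y)-\varphi(x))\rangle=f_w(y)-f_w(x)-\ud_xf_w(\varphi(y)-\varphi(x))=o(d(x,y))$ by differentiability of $f_w$. For general $v\in V$ and $w\in W$, splitting $v=w+(v-w)$ and estimating $|\langle v-w,f(y)-f(x)\rangle|\le\|v-w\|\Lip(f)d(x,y)$ together with $|\langle v-w,D_xf(\varphi(y)-\varphi(x))\rangle|\le\|T_x\|\,\|v-w\|\Lip(\varphi)d(x,y)$ yields
\[
\limsup_{U\ni y\to x}\frac{|\langle v,f(y)-f(x)-D_xf(\varphi(y)-\varphi(x))\rangle|}{d(y,x)}\le\|v-w\|\big(\Lip(f)+\|T_x\|\Lip(\varphi)\big),
\]
and letting $\|v-w\|\to0$ (density of $W$) gives \eqref{eq:w-star-differential}. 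For uniqueness, note that if $D$ satisfies the $w^*$-condition then, for each $v\in V$, $u\mapsto\langle v,D(u)\rangle$ is a linear differential of $f_v$ at $x$; by the uniqueness of linear differentials at $x\in U\setminus N$ (guaranteed by \eqref{eq:lin-uniqueness}) this forces $\langle v,D(u)\rangle=\langle v,D_xf(u)\rangle$ for all $v$ and $u$, and since $V$ separates points of $V^*$ we conclude $D=D_xf$. I expect the uniform bound $\|\ud_xf_w\|\le C_x\Lip(f)\|w\|$ obtained from the spanning of $L(\varphi,x)$ to be the crux; the remaining arguments are a routine approximation and the duality bookkeeping.
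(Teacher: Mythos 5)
Your proof is correct and follows essentially the same route as the paper's: reduce to the scalar functions $\langle v,f\rangle$ for $v$ in a countable dense $\Q$-linear subspace, get $\Q$-linearity from uniqueness of linear differentials, establish a uniform operator bound via a pointwise norm on $(\R^n)^*$ (your norm $u^*\mapsto\sup_{u\in L(\varphi,x)}|u^*(u)|$ coincides with the paper's $|\lambda|_x^*=\Lip(\lambda\circ\varphi|_U)(x)$), extend by continuity, pass to the adjoint to define $D_xf$, and verify \eqref{eq:w-star-differential} by density. Your explicit McShane extension and the final uniqueness argument are details the paper leaves implicit, but the underlying argument is the same.
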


\begin{proof}
	Let $D\subset V$ be a countable dense vector space over $\Q$, and $N\subset U$ a $\mu$-null set such that the unique differential $L_x(v):=\ud_x\langle v,f\rangle\in (\R^n)^*$ of $\langle v,f\rangle$ with respect to $(U,\varphi)$ exists for every $v\in D$ whenever $x\in U\setminus N$. We fix $x\in U\setminus N$. Since
	\begin{align*}
		\langle v+w,f(y)-f(x)\rangle &=\langle v,f(y)-f(x)\rangle+\langle w,f(y)-f(x)\rangle\\
		&=(L_x(v)+L_x(w))(\varphi(y)-\varphi(x))+o(d(x,y)),
	\end{align*}
	it follows by the uniqueness of the differential that $L_x(v+w)=L_x(v)+L_x(w)$ for $v,w\in D$. Similarly $L_x(av)=aL_x(v)$. These identities together with the estimate
	\begin{align*}
		\Lip(L_x(v)\circ\varphi|_U)(x)=\Lip(\langle v,f\rangle)(x)\le \|v\|\Lip f(x)
	\end{align*}
	show that $v\mapsto L_x(v)$ is a bounded linear map $D\to ((\R^n)^*,|\cdot|_x^*)$ and thus extends to a bounded linear map $L_x:V\to (\R^n)^*$. Here $|\lambda|_x^*=\Lip(\lambda\circ\varphi|_U)(x)$ is a norm on $(\R^n)^*$, in light of the fact that a weak Cheeger chart satisfies
\eqref{eq:lin-uniqueness}.
	
	We denote by $D_xf:(\R^n,|\cdot|_x)\to V^*$ the adjoint operator ($|\cdot|_x$ the dual norm of $|\cdot|_x^*$) and note that it satisfies $\langle D_xf(z),v\rangle =\langle L_x(v),z\rangle$ for all $z\in \R^n$ and $v\in V$. To see that $D_xf$ is the $w^*$-differential of $f$, observe that if $v\in V$, we have
	\begin{align*}
		\Lip(\langle v,f-D_xf\circ\varphi|_U\rangle)(x)&\le \Lip(\langle v_i,f-D_xf\circ\varphi|_U\rangle)(x)+\|v_i-v\|_V(\Lip f(x)+\Lip (D_xf\circ\varphi|_U)(x))\\
		&\le \Lip(\langle v_i,f\rangle-L_x(v_i)\circ\varphi|_U)(x)+\|v_i-v\|_V(\Lip f(x)+\Lip (D_xf\circ\varphi|_U)(x))\\
		&=0+\|v_i-v\|_V(\Lip f(x)+\Lip (D_xf\circ\varphi|_U)(x))
	\end{align*}
	for any $v_i\in D$. Taking $v_i\to v$ we obtain \eqref{eq:w-star-differential}.
\end{proof}

\begin{proposition}\label{prop:w-star-vs-metr-diff}
	Suppose $(U,\varphi)$ is a weak Cheeger chart and $f\in \LIP(U,V^*)$, where $V$ is a separable Banach space. If $f$ admits a metric differential $\md f$ with respect to $(U,\varphi)$, then for $\mu$-a.e. $x\in U$ we have $\md_xf(z)=\|D_xf(z)\|_{V^*}$ for all $z\in \R^n$.
\end{proposition}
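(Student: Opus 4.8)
The plan is to prove the two inequalities $\|D_xf(z)\|_{V^*}\le\md_xf(z)$ and $\md_xf(z)\le\|D_xf(z)\|_{V^*}$ separately, for $\mu$-a.e.\ $x$ and all $z\in\R^n$. Since $z\mapsto\md_xf(z)$ and $z\mapsto\|D_xf(z)\|_{V^*}$ are both seminorms on $\R^n$ (the latter because $D_xf$ is linear and $\|\cdot\|_{V^*}$ is a norm), they are continuous, and the cone $\R L(\varphi,x)$ is dense in $\R^n$ for $\mu$-a.e.\ $x$ (which holds for weak Cheeger charts by \cite[Lemma 9.1]{Ba}, cf.\ Corollary \ref{cor:dense-directions}). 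Hence it suffices to establish both inequalities for $z=v\in L(\varphi,x)$, the general case following by homogeneity of seminorms and continuity. I fix such a generic $x$ at which $\md_xf$, $D_xf$ and the unique differentials $\ud_x\langle v^*,f\rangle=\langle v^*,D_xf(\cdot)\rangle$ (for $v^*$ in a fixed countable dense $D\subset V$) all exist, and choose $x_j\to x$ in $U$ with $\tfrac{\varphi(x_j)-\varphi(x)}{d(x_j,x)}\to v$.

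Writing $r_j=d(x_j,x)$ and $w_j=\tfrac{f(x_j)-f(x)}{r_j}\in V^*$, the metric differential gives $\|w_j\|_{V^*}\to\md_xf(v)$, while the $w^*$-differential gives $\langle v^*,w_j\rangle\to\langle v^*,D_xf(v)\rangle$ for every $v^*\in V$; thus $w_j$ converges weak-$*$ to $D_xf(v)$ and is norm-bounded by $\Lip f(x)$. Weak-$*$ lower semicontinuity of the norm then yields $\|D_xf(v)\|_{V^*}\le\liminf_j\|w_j\|_{V^*}=\md_xf(v)$, which is the first inequality. The reverse inequality is the crux: it amounts to showing that no mass is lost in the weak-$*$ limit, i.e.\ $\|w_j\|_{V^*}\to\|D_xf(v)\|_{V^*}$. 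This cannot follow from testing against one functional at a time, since the near-maximizers $v^*_j$ of $\langle v^*_j,w_j\rangle\approx\|w_j\|_{V^*}$ may drift, and the $w^*$-differential controls the remainder $\langle v^*,f(y)-f(x)-D_xf(\varphi(y)-\varphi(x))\rangle$ only for each fixed $v^*$, never uniformly over the unit ball of $V$. This lack of uniformity is the main obstacle.

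To supply the missing uniformity I would pass to curves, following the scheme of \cite[Theorem 3.5]{amb-kir00}. The elementary one-dimensional input is: if $c:[a,b]\to V^*$ is Lipschitz, then its metric speed $|c'|(t):=\lim_{h\to0}\|c(t+h)-c(t)\|_{V^*}/|h|$ and its $w^*$-derivative $c'_{w^*}(t)$ (which exists a.e.\ by differentiating the absolutely continuous functions $t\mapsto\langle v^*,c(t)\rangle$, $v^*\in D$) satisfy $|c'|(t)=\|c'_{w^*}(t)\|_{V^*}$ for a.e.\ $t$. The nontrivial inequality $|c'|(t)\le\|c'_{w^*}(t)\|_{V^*}$ comes from the Gelfand representation $\langle v^*,c(s)-c(t)\rangle=\int_t^s\langle v^*,c'_{w^*}(\tau)\rangle\,d\tau$, which after taking the supremum over $\|v^*\|_V\le1$ gives $\|c(s)-c(t)\|_{V^*}\le\int_t^s\|c'_{w^*}(\tau)\|_{V^*}\,d\tau$, whence Lebesgue differentiation yields the claim; this is precisely where the one-at-a-time testing is converted into a uniform bound by integration. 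I would then realise the fixed direction $v$ through the Alberti representations underlying the density of directions in a weak Cheeger chart (\cite[Lemma 9.1]{Ba}): choosing a representation of $\mu|_U$ by curves $\gamma$ whose $\varphi$-image moves in a narrow cone about $v$, I set $c=f\circ\gamma$ and use the metric and $w^*$-differentials at $\gamma(t)$ to identify $|c'|(t)=\md_{\gamma(t)}f\big((\varphi\circ\gamma)'(t)\big)$ and $c'_{w^*}(t)=D_{\gamma(t)}f\big((\varphi\circ\gamma)'(t)\big)$ for a.e.\ $t$. The one-dimensional equality then gives $\md_{\gamma(t)}f=\|D_{\gamma(t)}f\|_{V^*}$ in the direction $(\varphi\circ\gamma)'(t)$, and integrating over the representation (Fubini) transfers this to $\mu$-a.e.\ $x\in U$ for that direction; ranging over a countable dense set of directions and invoking continuity of the two seminorms completes the proof.
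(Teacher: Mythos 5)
Your proposal follows essentially the same route as the paper's proof: weak-$*$ lower semicontinuity of the norm gives $\|D_xf(z)\|_{V^*}\le \md_xf(z)$, while the converse inequality is obtained, as in \cite[Theorem 3.5]{amb-kir00}, by integrating the $w^*$-derivative along curves coming from Alberti representations of $\mu|_U$ in narrow cone directions (via \cite{Ba}) and applying Lebesgue differentiation, with the conclusion extended to all $z\in\R^n$ by a countable dense set of directions, continuity and homogeneity. The only detail you gloss over is that these representations are by curve \emph{fragments} with compact (not interval) domains, so your one-dimensional lemma must be applied to the extension of $f\circ\gamma$ obtained by interpolating linearly across the gaps of ${\rm dom}(\gamma)$ and using density points --- precisely the device the paper's proof carries out.
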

The proof is a modification of the argument in \cite[Theorem 3.5]{amb-kir00}, and uses curve fragments and Alberti representations. A {\em curve fragment} in $X$ is a bi-Lipschitz map $\gamma:{\rm dom}(\gamma)\to X$ where ${\rm dom}(\gamma)\subset \R$ is compact, and the set ${\rm Fr}(X)$ of curve fragments in $X$ is equipped with the topology arising from the Hausdorff metric on their graphs, see \cite[Definition 2.1]{Ba}. An {\em Alberti representation} $\mathcal A=\{\nu_\gamma,\mathbb P\}$ of a (Radon) measure $\nu$ on $X$ consists of a finite positive measure $\mathbb P$ on ${\rm Fr}(X)$ and a family $\{\nu_\gamma\}$ of probability measures on $X$ such that
\begin{itemize}
	\item[(a)] $\nu_\gamma\ll\mathcal H^1|_{{\rm Im}(\gamma)}$ $\mathbb P$-a.e. $\gamma$;
	\item[(b)] $\gamma\mapsto \nu_\gamma(B)$ is $\mathbb P$-measurable and $\displaystyle \nu(B)=\int\nu_\gamma(B)\ud \mathbb P(\gamma)$ for every Borel $B\subset X$.
\end{itemize}
Given $\varphi\in \LIP(X,\R^n)$, $z\in S^{n-1}$, $\varepsilon>0$, and a cone $C(z,\varepsilon):=\{p\in \R^n: z\cdot p\ge (1-\varepsilon)|p| \}$, we say that the Alberti representation $\mathcal A$ is in the $\varphi$-direction of $C(z,\varepsilon)$ if $(\varphi\circ\gamma)'(t)\in C(z,\varepsilon)$ a.e. $t\in {\rm dom}(\gamma)$ for $\mathbb P$-a.e. $\gamma\in {\rm Fr}(X)$. Note that if $p\in C(z,\varepsilon)$, then $|z-p/|p||<2\varepsilon$. See \cite[Section 2, Definition 5.7, and Definition 7.3 ]{Ba} for the definition of independence, $\delta$-speed and $\xi$-separation of Alberti representations used in the proof below. Alberti representations have the following very useful property.  If $\Gamma_0\subset {\rm Fr}(X)$ is $\mathbb P$-null, $E_\gamma\subset{\rm dom}(\gamma)$ is $\Leb{1}$-null for each $\gamma\notin\Gamma_0$, and $\{(\gamma,t):\gamma\notin\Gamma_0,\ t\in E_\gamma\}\subset {\rm Fr}(X)\times \R$ is $\mathbb P\times\Leb{1}$-measurable,
then for $\nu$-a.e. $x\in X$ there exists $\gamma\notin \Gamma_0$ and $t\in {\rm dom}(\gamma)\setminus E_\gamma$ with $\gamma_t=x$, cf. \cite[Proposition 2.9]{Ba}.


The following facts, which will be used in the proof pf Proposition \ref{prop:w-star-vs-metr-diff}, can be established as in the proof of Proposition \ref{prop:w-star-differential}. Let $U\subset X$ be a Borel set and $f\in \LIP(U,V^*)$. If $\mathcal A$ is an Alberti representation of $\mu|_U$, then the limit
$$ (f\circ\gamma)_t'=w^*-\lim_{{\rm dom}(\gamma)\ni t'\to t}\frac{f(\gamma_{t'})-f(\gamma_t)}{t'-t}\in V^*$$ exists for a.e. $t\in {\rm dom}(\gamma)$ for $\mathbb P$-a.e. $\gamma$. If $(U,\varphi)$ is a weak Cheeger chart, then $D_{\gamma_t}f((\varphi\circ\gamma)_t')=(f\circ\gamma)_t'$ for a.e. $t\in {\rm dom}(\gamma)$ for $\mathbb P$-a.e. $\gamma$. Finally, for $\mathbb P$-a.e. $\gamma$, we have that $$ \|D_{\gamma_t}f((\varphi\circ\gamma)'_t)\|_{V^*}=\lim_{h\to 0^+}\frac 1h\int_t^{t+h}\chi_{{\rm dom}(\gamma)}(s)\|D_{\gamma_s}f((\varphi\circ\gamma)_s')\|_{V^*}\ud s$$ a.e. $t\in {\rm dom}(\gamma)$ since $t\mapsto \chi_{{\rm dom}(\gamma)}(t)\|D_{\gamma_t}f((\varphi\circ\gamma)'_t)\|_{V^*}$ is integrable for $\mathbb P$-a.e. $\gamma$, see \cite[Theorem 1.4]{CE} and \cite[Theorem 3.5]{CJP}. 

\begin{proof}[Proof of Proposition \ref{prop:w-star-vs-metr-diff}]
	
By passing to a subset we may assume that $(U,\varphi)$ is a  $\lambda$-structured chart and $\mu|_U$ has $n$ $\xi$-separated Alberti representations with speed strictly greater than $\delta$, for some numbers $\lambda,\xi,\delta>0$. Let $z\in S^{n-1}$ and $\varepsilon>0$. By \cite[Theorem 9.5]{Ba} $\mu|_U$ has an Alberti representation in the $\varphi$-direction $C(z,\varepsilon)$ with speed greater than $\tau=\tau(n,\lambda,\xi,\delta)>0$. Thus, for  $\mu$-a.e. $x\in U$, there exists a curve fragment $\gamma:{\rm dom}(\gamma)\to X$ in the $\varphi$-direction of $C(z,\varepsilon)$ and with $\varphi$-speed at least $\tau$, and $t\in {\rm dom}(\gamma)$ with $\gamma_t=x$, $(\varphi\circ\gamma)_t'\in C(z,\varepsilon)$, and $|(\varphi\circ\gamma)'_t|\ge \tau \Lip\varphi(x)|\gamma_t'|$ such that $\md_{x}f,\ D_xf, (\varphi\circ\gamma)_t',(f\circ\gamma)_t'$ exist and satisfy $(f\circ\gamma)_t'= D_{x}f((\varphi\circ\gamma)_t')$. Moreover, we may assume that
\begin{itemize}
\item[(1)] $(f\circ\gamma)_s'= D_{\gamma_s}f((\varphi\circ\gamma)_s')$ a.e. $s\in {\rm dom}(\gamma)$;
		
\item[(2)] we have $\displaystyle \|D_{\gamma_t}f((\varphi\circ\gamma)'_t)\|_{V^*}=\lim_{h\to 0^+}\frac 1h\int_t^{t+h}\chi_{{\rm dom}(\gamma)}(s)\|D_{\gamma_s}f((\varphi\circ\gamma)_s')\|_{V^*}\ud s$,  and \newline $\displaystyle \lim_{h\to 0^+}\frac{|[t,t+h]\cap{\rm dom}(\gamma)|}{h}=1$.
\end{itemize}
Indeed, (1) and (2) can be assumed to hold by the discussion before the proof.
	
	
From the lower semicontinuity of the norm with respect to $w^*$-convergence we obtain $$\|D_{x}f((\varphi\circ\gamma)_t')\|_{V^*}=\|(f\circ\gamma)_t'\|_{V^*}\le \md_xf((\varphi\circ\gamma)_t').$$ Denoting $z_\gamma:=z-\frac{(\varphi\circ\gamma)_t'}{|(\varphi\circ\gamma)_t'|}$ we have $\|D_xf(z)\|_{V^*}\le \md_xf(z)+\|D_xf(z_\gamma)\|_{V^*}+\md_xf(z_\gamma)$. Since, for $\mu$-a.e. $x\in U$, we have that for every $\varepsilon>0$ there exist $\gamma$ and $t\in {\rm dom}(\gamma)$ as above with $|z_\gamma|<2\varepsilon$, we obtain the inequality $\|D_xf(z)\|_{V^*}\le \md_xf(z)$ for $\mu$-a.e. $x\in U$.
	
We prove the opposite inequality. Let $f_\gamma:[a,b]\to V^*$ be the extension of $f\circ\gamma:{\rm dom}(\gamma)\to V^*$ to the smallest interval $[a,b]$ containing ${\rm dom}(\gamma)$ obtained by extending linearly into the gaps. Writing $[a,b]\setminus{\rm dom}(\gamma)=\bigcup_i(a_i,b_i)$, we have that $f_\gamma'(s)=\frac{f(\gamma_{b_i})-f(\gamma_{a_i})}{b_i-a_i}$, $s\in (a_i,b_i)$, so that $\|f_\gamma'\|_{V^*}\le \LIP(f\circ\gamma)$ on $[a,b]\setminus{\rm dom}(\gamma)$ and $f_\gamma'=(f\circ\gamma)'=D_{\gamma}f((\varphi\circ\gamma)')$ a.e. on ${\rm dom}(\gamma)$. For $v\in V$ with $\|v\|_V\le 1$  and $h>0$ we have
\begin{align*}
	\Big\langle \frac{f(\gamma_{t+h})-f(\gamma_t)}{h},v\Big\rangle=\frac 1h\int_t^{t+h}&\chi_{{\rm dom}(\gamma)}(s)\langle D_{\gamma_s}f((\varphi\circ\gamma)_s'),v\rangle\ud s\\
	&+ \frac 1h\int_t^{t+h}\chi_{\R\setminus{\rm dom}(\gamma)}(s)\langle f_\gamma'(s),v\rangle\ud s.
\end{align*}
Taking supremum over $v$with $\|v\|_V\le 1$ yields the estimate
\begin{align*}
\frac{\|f(\gamma_{t+h})-f(\gamma_t)\|_{V^*}}{h}\le \frac 1h\int_t^{t+h}&\chi_{{\rm dom}(\gamma)}(s)\|D_{\gamma_s}f((\varphi\circ\gamma)_s')\|_{V^*}\ud s\\
&+\LIP(f\circ\gamma)\frac{|[t,t+h]\setminus{\rm dom}(\gamma)|}{h}.
\end{align*}
Letting $h\to 0^+$ and using (2) we obtain
\begin{align*}
	\md_xf((\varphi\circ\gamma)_t')=\lim_{h\to 0^+}\frac{\|f(\gamma_{t+h})-f(\gamma_t)\|_{V^*}}{h}\le \|D_xf((\varphi\circ\gamma)_t')\|_{V^*}.
\end{align*}
Thus $\md_xf(z)\le \|D_xf(z)\|_{V^*}+\md_x f(z_\gamma)+\|D_xf(z_\gamma)\|_{V^*}$, where $z_\gamma=z-\frac{(\varphi\circ\gamma)_t'}{|(\varphi\circ\gamma)_t'|}$ satisfies $|z_\gamma|<2\varepsilon$. Arguing as above we get $\md_xf(z)\le \|D_xf(z)\|_{V^*}$ $\mu$-a.e. $x\in U$.

	
By choosing a countable dense set $D\subset \R^n$ it follows from the argument above that $\mu$-a.e. $x\in U$ we have $\md_xf(z)=\|D_xf(z)\|_{V^*}$ for all $z\in D$. For such $x$, the equality holds for all $z\in \R^n$ by continuity and 1-homogeneity. This completes the proof.
\end{proof}

%

\end{document}